\newtheorem{proposition}{Proposition}
\newtheorem{lemma}[proposition]{Lemma}
\newtheorem{theorem}[proposition]{Theorem}
\theoremstyle{remark}
\title[On a conjecture of Faulhuber and Steinerberger]{On a conjecture of Faulhuber and Steinerberger on the logarithmic derivative of $\vartheta_4$}
\author{Anne-Maria Ernvall-Hyt\"onen and Esa V. Vesalainen}
\address{Matematik och Statistik, {\AA}bo Akademi University, Domkyrkotorget 1, 20500 {\AA}bo, Finland}
\thanks{This work was supported by the Academy of Finland project 303820, and E.~V.~V. was supported by the Magnus Ehrnrooth Foundation.}
\begin{document}
\maketitle

\begin{abstract} Faulhuber and Steinerberger conjectured that the logarithmic derivative of $\vartheta_4$ has the property that $y^2\,\vartheta_4'(y)/\vartheta_4(y)$ is strictly decreasing and strictly convex. In this small note, we prove this conjecture. 
\end{abstract}

\section{Introduction}

The Jacobi $\vartheta$-functions are a classical topic of perennial interest. They appear in many fields of pure and applied mathematics.
Analytic properties and the behavior of these functions is crucial for applications. These properties have been studied, for instance, in \cite{coeffeycsordas, dixitroyzaharescu, Ernvall-Hytonen--Vesalainen, Faulhuber, Faulhuber2017, faulhubersteinerberger, Montgomery, schiefermayr,Solynin}.

In the following we are interested in the classical Jacobi $\vartheta$-function $\vartheta_4$: we set
\[
\vartheta_4(y)=\sum_{k=-\infty}^{\infty}(-1)^k\,e^{-\pi k^2y}
=\prod_{n=1}^\infty\left(1-e^{-2n\pi y}\right)\left(1-e^{-(2n-1)\pi y}\right)^2,
\]
for $y\in\left]0,\infty\right[$. Of course, $\vartheta_4$ is usually defined as a modular form in the upper complex half-plane, but as we are only interested in the values on the positive imaginary axis, we employ the common and very convenient abuse of notation of rotating the positive imaginary axis to the positive real axis.


This small note concentrates on proving the following theorem which was conjectured by Faulhuber and Steinerberger in \cite{faulhubersteinerberger}.
\begin{theorem}
The expression $y^2\,\vartheta_4'(y)/\vartheta_4(y)$ is strictly convex and strictly decreasing as a function of $y\in\left]0,\infty\right[$.
\end{theorem}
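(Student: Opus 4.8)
The plan is to recast the two-sided statement as a single monotonicity assertion by passing to the dual variable $u=1/y$ via the classical transformation $\vartheta_4(y)=y^{-1/2}\,\vartheta_2(1/y)$, where $\vartheta_2(u)=\sum_{k\in\mathbb Z}e^{-\pi(k+1/2)^2u}$. Differentiating $\log\vartheta_4(y)=-\tfrac12\log y+\log\vartheta_2(1/y)$ yields the clean identity
\[
g(y):=y^2\,\frac{\vartheta_4'(y)}{\vartheta_4(y)}=-\frac y2-(\log\vartheta_2)'(1/y).
\]
Introducing $P(u)=u^2(\log\vartheta_2)''(u)$, a direct computation gives $g'(y)=P(1/y)-\tfrac12$ and $g''(y)=-y^{-2}P'(1/y)$. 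Since $\vartheta_4(1/u)\to1$ as $u\to0^+$, the same transformation shows $\log\vartheta_2(u)=-\tfrac12\log u+O(e^{-\pi/u})$, whence $P(0^+)=\tfrac12$. Therefore, if $P$ is strictly decreasing on $]0,\infty[$, then $P(u)<\tfrac12$ for every $u>0$ (giving $g'<0$, strict monotonicity) and $P'(u)<0$ (giving $g''>0$, strict convexity). Thus I would reduce the whole theorem to the single claim that \emph{$P$ is strictly decreasing}.

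To attack $P'<0$ I would transform back to the $\vartheta_4$ side, where the product formula gives the Lambert series $\vartheta_4'(y)/\vartheta_4(y)=\pi\sum_{N\ge1}r(N)e^{-\pi Ny}$ with $r(N)=\sigma(N)+\sigma_{\mathrm{odd}}(N)>0$, $\sigma_{\mathrm{odd}}$ denoting the sum of odd divisors. Differentiating $g(y)=\pi y^2\sum_N r(N)e^{-\pi Ny}$ twice shows that $P'(1/y)<0$ is exactly $g''(y)>0$, that is,
\[
\sum_{N=1}^\infty r(N)\bigl(\pi^2N^2y^2-4\pi Ny+2\bigr)e^{-\pi Ny}>0\qquad(y>0).
\]
Equivalently, on the $\vartheta_2$ side one has $(\log\vartheta_2)''(u)=\pi^2\,\mathrm{Var}_u(a)$, where $a=(k+1/2)^2$ carries the weights proportional to $e^{-\pi(k+1/2)^2u}$; then $P(u)=\pi^2u^2\,\mathrm{Var}_u(a)$, and $P'<0$ becomes the moment inequality $2\,\mathrm{Var}_u(a)<\pi u\,\mu_3(u)$ with $\mu_3$ the third central moment.

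The two extreme regimes are then easy. The quadratic $t\mapsto t^2-4t+2$ (with $t=\pi Ny$) is positive for $t>2+\sqrt2$, so once $\pi y>2+\sqrt2$ every term of the displayed series is positive and $g''>0$ is immediate; this settles all large $y$. For small $y$, i.e. large $u$, the weights concentrate on the two lowest values $\tfrac14$ and $\tfrac94$ of $a$, the two-level approximation gives $\mathrm{Var}_u(a)\sim4e^{-2\pi u}$ and hence $P(u)\sim4\pi^2u^2e^{-2\pi u}$, which is strictly decreasing for $u>1/\pi$; this settles all sufficiently small $y$.

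The hard part is the intermediate range, and its difficulty is structural rather than technical: the leading-order continuum contribution to the critical series vanishes identically, since
\[
\int_0^\infty t\,(t^2-4t+2)\,e^{-t}\,dt=6-8+2=0.
\]
Thus positivity of $g''$ is not visible from the average size of $r(N)$ but is a genuinely subleading effect, reflecting the fluctuations of $\sum_{N\le x}r(N)$ about its main term $c\,x^2$ — equivalently, the discreteness corrections to the Gaussian approximation of $\vartheta_2$, which render the inequality $2\,\mathrm{Var}_u(a)<\pi u\,\mu_3(u)$ strict even though it degenerates to an equality in the continuum limit $u\to0$. I expect this to be the crux: resolving it should require Poisson summation to expose the sign of these corrections (or, equivalently, sharp asymptotics with error terms for the partial sums of $r$), after which a quantitative estimate can close the remaining compact range of $y$.
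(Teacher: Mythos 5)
Your reduction is correct as far as it goes: the identities $g'(y)=P(1/y)-\tfrac12$ and $g''(y)=-y^{-2}P'(1/y)$, the limit $P(0^+)=\tfrac12$, and the observation that ``$P$ strictly decreasing'' yields both convexity and monotonicity at once are all sound, and your large-$y$ step is complete and in fact cleaner than the paper's: for $\pi y>2+\sqrt2$ every single term of $\sum_{N\geqslant1}r(N)\left(\pi^2N^2y^2-4\pi Ny+2\right)e^{-\pi Ny}$ is positive, so convexity there is immediate, whereas the paper needs a case analysis (separate treatment of $n=1$, numerical evaluation of an auxiliary function at $y=1$) to push its threshold down to $y\geqslant1$. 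But the proposal is not a proof, and you say so yourself: for small $y$ you offer only the asymptotic statement $P(u)\sim4\pi^2u^2e^{-2\pi u}$, with no explicit threshold and no error control, and everything between ``sufficiently small $y$'' and $y>(2+\sqrt2)/\pi\approx1.087$ is deferred to a hoped-for Poisson summation argument that you do not carry out. That missing range is the actual content of the theorem.

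What you are missing is that the route you gesture at closes the gap completely, with no separate ``intermediate range'' at all, once it is made quantitative. The modular relation $\vartheta_4(y)=y^{-1/2}\,\vartheta_2(1/y)$ \emph{is} the Poisson summation you invoke, and the paper's Lemma \ref{sharper-theta2-approximations} upgrades your two-level approximation to two-sided bounds $0<\vartheta_{2,\nu}(u)<(-1)^\nu\vartheta_2^{(\nu)}(u)<\Theta_{2,\nu}(u)$ for $\nu\in\{0,1,2,3\}$, with explicit constants $c_\nu$, valid on all of $u\in\left[1,\infty\right[$ rather than merely asymptotically. Feeding these into the transformed second derivative reduces convexity on all of $y\in\left]0,1\right]$ to an explicit exponential-polynomial inequality (in the dual variable, $e^{4\pi y}(1984y-632)$ dominating the remaining terms for $y\geqslant1$), which is elementary. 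Your structural diagnosis --- that positivity is invisible to the continuum approximation because $\int_0^\infty t(t^2-4t+2)e^{-t}\,\mathrm dt=0$ --- correctly explains why the Lambert series alone cannot settle small $y$, but it is also why the worry dissolves: after the transform, the relevant positive quantity is the \emph{leading} term, and no analysis of the fluctuations of $\sum_{N\leqslant x}r(N)$ is needed. Note finally that since your term-by-term threshold $(2+\sqrt2)/\pi$ exceeds $1$, a quantitative small-$y$ lemma valid for $u\geqslant1$ (i.e.\ $y\leqslant1$) would still leave the strip $y\in\left]1,(2+\sqrt2)/\pi\right]$ uncovered; you would have to either extend the $\vartheta_2$ bounds below $u=1$ or, as the paper does, work harder on the Lambert-series side to reach $y=1$.
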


The proof will be structured as follows: We prove the convexity in two parts, for small and for large values of $y$ separately. After this, it is very simple and straightforward to prove that the function is decreasing.

By looking at the details of the proof, it is clear that the exponent $2$ of $y$ is not the best possible. However, by looking at graphs of the function for different values of the exponent, it immediately becomes clear that the exponent cannot be improved very much. For example, when the exponent is $2.1$, the function is not convex everywhere.

\section{Results and proofs}
We study the function
\[
f(y)=\frac{y^2\,\vartheta_4'(y)}{\vartheta_4(y)},
\]
defined for all $y\in\left]0,\infty\right[$.

\begin{theorem}
The function $f(y)=y^2\,\vartheta_4'(y)/\vartheta_4(y)$ is strictly convex for $y\in\left[1,\infty\right[$.
\end{theorem}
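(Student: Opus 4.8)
The plan is to work with the logarithmic derivative directly, since the product formula for $\vartheta_4$ converts the logarithm into a sum. Writing $L(y)=\log\vartheta_4(y)$, we have
\[
L(y)=\sum_{n=1}^\infty\log\bigl(1-e^{-2n\pi y}\bigr)+2\sum_{n=1}^\infty\log\bigl(1-e^{-(2n-1)\pi y}\bigr),
\]
so that $\vartheta_4'(y)/\vartheta_4(y)=L'(y)$ and $f(y)=y^2\,L'(y)$. First I would compute $L'(y)$ termwise: each term $\log(1-e^{-m\pi y})$ differentiates to $\pi m\,e^{-m\pi y}/(1-e^{-m\pi y})=\pi m\sum_{j=1}^\infty e^{-jm\pi y}$. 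Summing over the two families collapses everything into a single Lambert-type series; the upshot is that $L'(y)$ is a nonnegative combination of decaying exponentials $e^{-c\pi y}$ with explicit positive coefficients, and the same is true for every derivative of $L$ up to sign patterns.

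Next I would expand $f''(y)$ by the product rule:
\[
f''(y)=2L'(y)+4y\,L''(y)+y^2\,L'''(y).
\]
The strategy is to show this is strictly positive on $[1,\infty[$ by reducing everything to a single exponential series in $e^{-\pi y}$ and bounding it term by term. After substituting the series expansions, $f''(y)$ becomes a sum $\sum_{N\ge1}a_N(y)\,e^{-N\pi y}$ where each coefficient $a_N(y)$ is a polynomial in $y$ arising from collecting all the ways $N$ factors as a product of an exponent and its Lambert index. The dominant term is $N=1$, coming solely from the odd family with a factor of $2$; its contribution is $2\pi(2+4\pi y\cdot(-\pi)\cdots)$—more precisely one gets $2e^{-\pi y}$ times a quadratic in $y$ with leading positive coefficient $2\pi^3 y^2$. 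I would show that this leading $N=1$ term dominates the entire tail $\sum_{N\ge2}$ uniformly for $y\ge1$.

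The main obstacle, and the step I expect to require the most care, is the tail estimate: controlling $\sum_{N\ge2}|a_N(y)|\,e^{-N\pi y}$ against the single positive term $a_1(y)\,e^{-\pi y}$. The difficulty is twofold. First, the coefficients $a_N(y)$ involve cubic polynomials in $y$ (from the $y^2L'''$ piece) whose coefficients grow with the divisor structure of $N$, so I would need a clean upper bound such as $|a_N(y)|\le C\,N^3\,y^2$ or similar, using $\sum_{d\mid N}d^3\le N^3\,\zeta(3)$-type estimates. Second, since $e^{-N\pi y}/e^{-\pi y}=e^{-(N-1)\pi y}\le e^{-(N-1)\pi}$ for $y\ge1$, the geometric decay $e^{-\pi}\approx0.043$ is extremely favorable, so after factoring out $e^{-\pi y}$ the comparison reduces to showing a convergent numerical series $\sum_{N\ge2}C\,N^3\,e^{-(N-1)\pi}$ is smaller than the guaranteed positive lower bound $2\pi^3y^2\ge2\pi^3$ of the main term. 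Because $e^{-\pi}$ is so small, this should close with substantial room to spare, the only genuine work being to make the polynomial-in-$y$ coefficient bounds explicit enough that they hold for all $y\ge1$ rather than merely asymptotically.

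One technical point I would verify carefully at the outset is that termwise differentiation of the series for $L$ is justified on $[1,\infty[$: since each differentiated series converges uniformly on $[1,\infty[$ (being dominated by a convergent geometric series after pulling out $e^{-\pi y}\le e^{-\pi}$), all the manipulations above are rigorous, and in particular $f''$ genuinely equals its exponential-series expansion.
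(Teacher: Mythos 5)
Your setup is correct and your series expansion is genuinely different from the paper's (the paper keeps each Lambert-type term $m\pi/(e^{m\pi y}-1)$ intact and proves positivity group by group; you expand the geometric series and regroup by the total exponent $N$). However, your central claim --- that the $N=1$ term is positive with a ``guaranteed positive lower bound $2\pi^3 y^2$'' and dominates the tail --- is false, and this breaks the proof. Carrying out your own expansion, one finds
\[
f''(y)=2L'(y)+4y\,L''(y)+y^2\,L'''(y)
=\pi\sum_{N\geqslant1}b_N\left(\pi^2N^2y^2-4\pi Ny+2\right)e^{-N\pi y},
\]
where $b_N>0$ collects the (doubled odd and plain even) divisors of $N$; in particular $b_1=2$, so the $N=1$ term is $2\pi e^{-\pi y}\left(\pi^2y^2-4\pi y+2\right)$. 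The quadratic $t^2-4t+2$ in $t=N\pi y$ is negative exactly for $t\in\left]2-\sqrt2,\,2+\sqrt2\right[$, and since $\pi<2+\sqrt2\approx3.414$, the $N=1$ term is \emph{negative} for all $y\in\left[1,(2+\sqrt2)/\pi\right[\approx\left[1,1.087\right[$; numerically, at $y=1$ it equals $2\pi e^{-\pi}(\pi^2-4\pi+2)\approx-0.19$. Your claimed lower bound $2\pi^3y^2$ keeps only the $y^2L'''$ piece and discards the cross term $-4\pi y$ coming from $4yL''$, which is precisely what makes the coefficient negative near $y=1$. Consequently the proposed comparison ``positive main term versus small tail'' is proving the wrong inequality on $\left[1,1.087\right[$: there the main term is negative and the (positive) tail must dominate \emph{it}, not the other way around.

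The approach is salvageable, but with the logic inverted. For $N\geqslant2$ and $y\geqslant1$ one has $N\pi y\geqslant2\pi>2+\sqrt2$, so \emph{every} term with $N\geqslant2$ is positive; the work that remains is to show that on the window $\left[1,(2+\sqrt2)/\pi\right]$ the single $N=2$ term, $4\pi e^{-2\pi y}\left(4\pi^2y^2-8\pi y+2\right)\geqslant4\pi\left(4\pi^2-8\pi+2\right)e^{-2\pi y}$, exceeds the deficit of the $N=1$ term, which is at most $2\pi\left(4\pi-2-\pi^2\right)e^{-\pi y}$ there; a short computation shows this holds with a factor of about $1.5$ to spare at the worst point. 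This is an honest alternative to the paper's argument, but it is not the argument you described: the delicate behavior near $y=1$ (which the paper handles by explicit calculus on its $n=1$ odd term, via the auxiliary function $g$ and the evaluations $g(1),g'(1)$ and the sign of $g''$) cannot be bypassed by a generic main-term-plus-tail estimate, because the term you designated as the positive main term is the unique negative one in the whole series.
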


\begin{proof}
Notice first that
\begin{multline*}
f(y)=y^2\,\frac{\mathrm d}{\mathrm dy}\,\log\vartheta_4(y)=y^2\,\frac{\mathrm d}{\mathrm dy}\,\log \prod_{n=1}^{\infty}\left(1-e^{-2n\pi y}\right)\left(1-e^{-(2n-1)\pi y}\right)^2\\
=y^2\,\frac{\mathrm d}{\mathrm dy}\sum_{n=1}^{\infty}\left(\log \left(1-e^{-2n\pi y}\right)+2\log \left(1-e^{-(2n-1)\pi y}\right)\right)\\
=y^2\sum_{n=1}^{\infty}\left(\frac{2n\pi\,e^{-2n\pi y}}{1-e^{-2n\pi y}}+2\,\frac{(2n-1)\,\pi\,e^{-(2n-1)\pi y}}{1-e^{-(2n-1)\pi y}}\right)\\
=2\,y^2\sum_{n=1}^{\infty}\left(\frac{n\pi}{e^{2n\pi y}-1}+\frac{(2n-1)\,\pi}{e^{(2n-1)\pi y}-1}\right).
\end{multline*}
Let us now differentiate:
\begin{multline*}
f'(y)=4y\sum_{n=1}^{\infty}\left(\frac{n\pi}{e^{2n\pi y}-1}+\frac{(2n-1)\,\pi}{e^{(2n-1)\pi y}-1}\right)\\
-2\,y^2\sum_{n=1}^{\infty}\left(\frac{2\,n^2\,\pi^2\,e^{2n\pi y}}{(e^{2n\pi y}-1)^2}+\frac{(2n-1)^2\,\pi^2\,e^{(2n-1)\pi y}}{(e^{(2n-1)\pi y}-1)^2}\right).
\end{multline*}
Let us differentiate again:
\begin{multline*}
f''(y)=4\sum_{n=1}^{\infty}\left(\frac{n\pi}{e^{2n\pi y}-1}+\frac{(2n-1)\,\pi}{e^{(2n-1)\pi y}-1}\right)\\
-8y\sum_{n=1}^{\infty}\left(\frac{2\,n^2\,\pi^2\,e^{2n\pi y}}{(e^{2n\pi y}-1)^2}+\frac{(2n-1)^2\,\pi^2\,e^{(2n-1)\pi y}}{(e^{(2n-1)\pi y}-1)^2}\right)\\
-2\,y^2\sum_{n=1}^{\infty}\left(\frac{4\,n^3\,\pi^3\,e^{2n\pi y}}{(e^{2n\pi y}-1)^2}+\frac{(2n-1)^3\,\pi^3\,e^{(2n-1)\pi y}}{(e^{(2n-1)\pi y}-1)^2}\right)\\
+2\,y^2\sum_{n=1}^{\infty}\left(\frac{2\cdot 4\,n^3\,\pi^3\,e^{4n\pi y}}{(e^{2n\pi y}-1)^3}+\frac{2\,(2n-1)^3\,\pi^3\,e^{2(2n-1)\pi y}}{(e^{(2n-1)\pi y}-1)^3}\right).
\end{multline*}
First we can simplify by combining the last two rows by
\[
\frac{2\cdot 4\,n^3\,\pi^3\,e^{4n\pi y}}{(e^{2n\pi y}-1)^3}-\frac{4\,n^3\,\pi^3\,e^{2n\pi y}}{(e^{2n\pi y}-1)^2}=\frac{4\,n^3\,\pi^3\,e^{2n\pi y}}{(e^{2n\pi y}-1)^3}\left(e^{2n\pi y}+1\right)
\]
and
\[
\frac{2\,(2n-1)^3\,\pi^3\,e^{2(2n-1)\pi y}}{(e^{(2n-1)\pi y}-1)^3}-\frac{(2n-1)^3\,\pi^3\,e^{(2n-1)\pi y}}{(e^{(2n-1)\pi y}-1)^2}=\frac{(2n-1)^3\,\pi^3\,e^{(2n-1)\pi y}}{(e^{(2n-1)\pi y}-1)^3}\left(e^{(2n-1)\pi y}+1\right).
\]
The second derivative can be rewritten as
\begin{multline*}
f''(y)=\sum_{n=1}^{\infty}\left(4\,\frac{n\pi}{e^{2n\pi y}-1}-8y\frac{2\,n^2\,\pi^2\,e^{2n\pi y}}{(e^{2n\pi y}-1)^2}+2\,y^2\,\frac{4\,n^3\,\pi^3\,e^{2n\pi y}}{(e^{2n\pi y}-1)^3}\left(e^{2n\pi y}+1\right)\right) \\
+\sum_{n=1}^{\infty}\left(4\,\frac{(2n-1)\,\pi}{e^{(2n-1)\pi y}-1}-8y\,\frac{(2n-1)^2\,\pi^2\,e^{(2n-1)\pi y}}{(e^{(2n-1)\pi y}-1)^2}\right.\\
\left.+2\,y^2\,\frac{(2n-1)^3\,\pi^3\,e^{(2n-1)\pi y}}{(e^{(2n-1)\pi y}-1)^3}\left(e^{(2n-1)\pi y}+1\right)\right).
\end{multline*}
Let us now look at the terms in the sums, starting with the first sum:
\begin{multline*}
4\,\frac{n\pi}{e^{2n\pi y}-1}-8y\,\frac{2\,n^2\,\pi^2\,e^{2n\pi y}}{(e^{2n\pi y}-1)^2}+2\,y^2\,\frac{4\,n^3\,\pi^3\,e^{2n\pi y}}{(e^{2n\pi y}-1)^3}\left(e^{2n\pi y}+1\right)\\
>-8y\,\frac{2\,n^2\,\pi^2\,e^{2n\pi y}}{(e^{2n\pi y}-1)^2}
+2\,y^2\,\frac{4\,n^3\,\pi^3\,e^{2n\pi y}}{(e^{2n\pi y}-1)^3}\left(e^{2n\pi y}+1\right)\\
=\frac{8\,n^2\,\pi^2\,y\,e^{2n\pi y}}{(e^{2n\pi y}-1)^3}\left(n\pi y\left(e^{2n\pi y}+1\right)-2\left(e^{2n\pi y}-1\right)\right).
\end{multline*}
The first factor is positive, and the second factor is certainly positive for all positive $n\in\mathbb Z_+$ when $y\geqslant2/\pi$.
Let us now move to the other sum. Let us treat the case $n=1$ separately:
\begin{multline*}
4\,\frac{\pi}{e^{\pi y}-1}-8y\,\frac{\pi^2\,e^{\pi y}}{(e^{\pi y}-1)^2}+2\,y^2\,\frac{\pi^3\,e^{\pi y}}{(e^{\pi y}-1)^3}\left(e^{\pi y}+1\right)\\
=\frac{2\pi}{(e^{\pi y}-1)^3}\left(2\left(e^{\pi y}-1\right)^2-4y\pi\, e^{\pi y}\left(e^{\pi y}-1\right)+\pi^2\,y^2\,e^{\pi y}\left(e^{\pi y}+1\right)\right).
\end{multline*}
Let us now show that this expression is positive. Define for $y\in\mathbb R_+$
\[
g(y)=2\left(e^{\pi y}-1\right)^2-4y\pi\, e^{\pi y}\left(e^{\pi y}-1\right)+\pi^2\,y^2\,e^{\pi y}\left(e^{\pi y}+1\right).
\]
We have
\begin{multline*}
g''(y)=2 \,e^{\pi y}\, \pi^2+2\, e^{2 \pi y}\, \pi^2+4\, e^{2 \pi y} \,\pi \left(-4 \pi+2 \,\pi^2\, y\right)+2 \,e^{\pi y}\, \pi \left(4 \pi+2\, \pi^2\, y\right)\\+4 \,e^{2 \pi y} \,\pi^2\left(2-4 \pi y+\pi^2\, y^2\right)+e^{\pi y} \,\pi^2 \left(-4+4 \pi y+\pi^2\, y^2\right).
\end{multline*}
The last term is clearly positive when $y>{1}/{\pi}$. Since
\[
4\, e^{2 \pi y} \,\pi \left(-4 \pi+2 \,\pi^2 \,y\right)+4 \,e^{2 \pi y}\, \pi^2 \left(2-4 \pi y+\pi^2 \,y^2\right)=4\,e^{2\pi y}\,\pi\left(\pi^3\,y^2-2\,\pi^2\,y-2\pi\right)>0,
\]
when $y> {1}/{\pi}+{\sqrt{3}}/{\pi}$, the expression $g''(y)>0$ when $y> {1}/{\pi}+{\sqrt{3}}/{\pi}$. Furthermore, since
\[
g'(1)\approx 3584.5,
\]
the first derivative is also positive. It thus suffices to compute $g(1)$:
\[
g(1)\approx 55.5>0.
\]
Let us now treat the terms with $n>1$:
\begin{multline*}
4\,\frac{(2n-1)\,\pi}{e^{(2n-1)\pi y}-1}-8y\,\frac{(2n-1)^2\,\pi^2\,e^{(2n-1)\pi y}}{(e^{(2n-1)\pi y}-1)^2}+2\,y^2\,\frac{(2n-1)^3\,\pi^3\,e^{(2n-1)\pi y}}{(e^{(2n-1)\pi y}-1)^3}\left(e^{(2n-1)\pi y}+1\right)\\
>-8y\,\frac{(2n-1)^2\,\pi^2\,e^{(2n-1)\pi y}}{(e^{(2n-1)\pi y}-1)^2}
+2\,y^2\,\frac{(2n-1)^3\,\pi^3\,e^{(2n-1)\pi y}}{(e^{(2n-1)\pi y}-1)^3}\left(e^{(2n-1)\pi y}+1\right)\\
=\frac{2\,(2n-1)^2\,\pi^2\,y\,e^{(2n-1)\pi y}}{(e^{(2n-1)\pi y}-1)^3}\left((2n-1)\,\pi y \left(e^{(2n-1)\pi y}+1\right)-4\left(e^{(2n-1)\pi y}-1\right)\right)\\
>\frac{2\,(2n-1)^2\,\pi^2\,y\,e^{(2n-1)\pi y}}{(e^{(2n-1)\pi y}-1)^3}\left((2n-1)\,\pi y \,e^{(2n-1)\pi y}-4\,e^{(2n-1)\pi y}\right)>0,
\end{multline*}
when $3y\pi>4$, so certainly when $y\geqslant 1$. This completes the proof.
\end{proof}

Recall the Jacobi $\vartheta$-function $\vartheta_2$ defined for $y\in\mathbb R_+$ by
\[
\vartheta_2(y)=\sum_{n=-\infty}^{\infty}e^{-\pi y(n+1/2)^2}.
\]
We prove the following estimates for this function.

\begin{lemma}\label{sharper-theta2-approximations}
For $y\in\left[1,\infty\right[$ and $\nu\in\left\{0,1,2,3\right\}$, we have
\[0<\vartheta_{2,\nu}(y)<(-1)^\nu\,\vartheta_2^{(\nu)}(y)<\Theta_{2,\nu}(y),\]
where
\[\vartheta_{2,\nu}(y)=\frac{2\,\pi^\nu\,e^{-\pi y/4}}{4^\nu}+\frac{2\cdot9^\nu\,\pi^\nu\,e^{-9\pi y/4}}{4^\nu},\]
and
\[\Theta_{2,\nu}(y)=\frac{2\,\pi^\nu\,e^{-\pi y/4}}{4^\nu}+\frac{2\left(1+c_\nu\right)\cdot9^\nu\,\pi^\nu\,e^{-9\pi y/4}}{4^\nu},\]
where in turn
\[c_0=0.00001,\qquad
c_1=0.00003,\qquad
c_2=0.00008,\qquad\text{and}\qquad
c_3=0.0003.\]
\end{lemma}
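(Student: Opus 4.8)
The plan is to reduce everything to an explicit absolutely convergent series for $(-1)^\nu\,\vartheta_2^{(\nu)}(y)$ and then to control its tail. First I would pair the terms $n$ and $-n-1$ in the defining sum, using $(n+1/2)^2=(-n-1+1/2)^2$, to obtain
\[
\vartheta_2(y)=2\sum_{k=0}^\infty e^{-\pi y(2k+1)^2/4}.
\]
Differentiating $\nu$ times term by term, which is justified by the rapid decay of the exponentials, gives
\[
(-1)^\nu\,\vartheta_2^{(\nu)}(y)=\frac{2\,\pi^\nu}{4^\nu}\sum_{k=0}^\infty(2k+1)^{2\nu}\,e^{-\pi y(2k+1)^2/4},
\]
all of whose terms are strictly positive. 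The quantity $\vartheta_{2,\nu}(y)$ is exactly the sum of the $k=0$ and $k=1$ terms of this series, so both the positivity $\vartheta_{2,\nu}(y)>0$ and the lower bound $\vartheta_{2,\nu}(y)<(-1)^\nu\,\vartheta_2^{(\nu)}(y)$ are immediate.

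For the upper bound, I would subtract off the first two terms: the claim $(-1)^\nu\,\vartheta_2^{(\nu)}(y)<\Theta_{2,\nu}(y)$ is equivalent, after dividing through by the positive factor $\bigl(2\,\pi^\nu/4^\nu\bigr)\,e^{-9\pi y/4}$, to
\[
\sum_{k=2}^\infty(2k+1)^{2\nu}\,e^{-\pi y\left((2k+1)^2-9\right)/4}<c_\nu\cdot 9^\nu.
\]
For each $k\geqslant2$ the exponent $(2k+1)^2-9$ is positive, so every term on the left is a decreasing function of $y$; hence the left-hand side attains its maximum on $\left[1,\infty\right[$ at $y=1$, and it suffices to verify the inequality there.

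Finally I would estimate the tail at $y=1$ by comparison with a geometric series. Writing $a_k=(2k+1)^{2\nu}\,e^{-\pi\left((2k+1)^2-9\right)/4}$, the ratio of consecutive terms is
\[
\frac{a_{k+1}}{a_k}=\left(\frac{2k+3}{2k+1}\right)^{2\nu}e^{-2\pi(k+1)}\leqslant\left(\frac75\right)^{6}e^{-6\pi}<10^{-7}
\]
for all $k\geqslant2$ and all $\nu\in\left\{0,1,2,3\right\}$, since both factors are decreasing in $k$ and $\left(7/5\right)^{2\nu}$ is largest at $\nu=3$. Consequently
\[
\sum_{k=2}^\infty a_k<\frac{a_2}{1-10^{-7}}<\left(1+2\cdot10^{-7}\right)\cdot 25^\nu\,e^{-4\pi},
\]
so the desired bound reduces to the four numerical inequalities $\left(25/9\right)^\nu\,e^{-4\pi}\bigl(1+2\cdot10^{-7}\bigr)<c_\nu$, which hold with comfortable margins for $\nu=0,1,2,3$.

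The only genuine work is this explicit tail estimate together with the verification of the four numerical inequalities. I do not expect any real obstacle here: because the ratio $a_{k+1}/a_k$ is of order $10^{-8}$, the tail is completely dominated by its leading term $25^\nu\,e^{-4\pi}$, and the margins against the chosen constants $c_\nu$ are large enough that no delicacy in the estimates is required. The slight arbitrariness in the values of the $c_\nu$ simply reflects that they were chosen generously; any comparably small constants would serve equally well.
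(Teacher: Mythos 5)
Your proof is correct, and while its overall skeleton is forced by the statement (expand $(-1)^\nu\,\vartheta_2^{(\nu)}(y)$ as a positive series whose first two terms are exactly $\vartheta_{2,\nu}(y)$, so that positivity and the lower bound are immediate and only the tail bound needs work), your treatment of the tail --- the only substantive step --- is genuinely different from the paper's. The paper bounds the tail
\[
\sum_{\substack{n\geqslant5,\\2\nmid n}}n^{2\nu}\,e^{-\pi n^2y/4}
\]
by substituting $m=n^2$ and relaxing to the sum $\sum_{m=25}^\infty m^\nu\,e^{-\pi my/4}$ over \emph{all} integers $m\geqslant25$, then compares this with the integral $\int_{24}^\infty t^\nu\,e^{-\pi ty/4}\,\mathrm dt$ (using that $t^\nu e^{-\pi ty/4}$ decreases on $\left[24,\infty\right[$ when $y\geqslant1$), which is computed explicitly for each $\nu$; this $y$-dependent bound is what the constants $c_\nu$ were calibrated against, and for $\nu=0$ at $y=1$ it comes within a few percent of $c_0$, so the paper's margins are genuinely tight. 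You instead reduce to $y=1$ by monotonicity of each tail term and run a ratio test: consecutive terms decrease by a factor at most $(7/5)^6e^{-6\pi}\approx4.9\cdot10^{-8}<10^{-7}$, so the tail is essentially its leading term $25^\nu e^{-4\pi}$, and the four numerical inequalities $(25/9)^\nu\,e^{-4\pi}\left(1+2\cdot10^{-7}\right)<c_\nu$ close the argument --- I checked all four, and with $e^{-4\pi}\approx3.49\cdot10^{-6}$ they hold with margins of roughly a factor $3$ to $4$. Your route is more elementary (no integrals at all), produces sharper estimates, and reduces everything to transparent finite numerical checks; the paper's route keeps the bound uniform in $y$ without a reduction step and explains where the specific values of $c_\nu$ come from, but both arguments are fully rigorous.
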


\begin{proof}
Let us first observe that it is easy to check that the expression $t^\nu\,e^{-\pi ty/4}$ is strictly decreasing as a function of $t\in\left[24,\infty\right[$ for any fixed $y\in\left[1,\infty\right[$ and $\nu\in\left\{0,1,2,3\right\}$.
Now the key idea is to estimate
\begin{align*}
0&<(-1)^\nu\,\vartheta_2^{(\nu)}(y)-\frac{2\,\pi^\nu\,e^{-\pi y/4}}{4^\nu}-\frac{2\cdot9^\nu\,\pi^\nu\,e^{-9\pi y/4}}{4^\nu}
=\frac{2\,\pi^\nu}{4^\nu}\sum_{\substack{n\geqslant5,\\2\nmid n}}n^{2\nu}\,e^{-\pi n^2y/4}\\
&<\frac{2\,\pi^\nu}{4^\nu}\sum_{n=25}^\infty n^\nu\,e^{-\pi ny/4}
<\frac{2\cdot9^\nu\,\pi^\nu\,e^{-9\pi y/4}}{4^\nu}\cdot\frac{e^{9\pi y/4}}{9^\nu}\int\limits_{24}^\infty t^\nu\,e^{-\pi ty/4}\,\mathrm dt.
\end{align*}
The rest is simple as the last integral can be computed explicitly for each $\nu\in\left\{0,1,2,3\right\}$.
\end{proof}

\begin{theorem}
The function $f(y)=y^2\,\vartheta_4'(y)/\vartheta_4(y)$ is strictly convex for $y\in\left]0,1\right]$.
\end{theorem}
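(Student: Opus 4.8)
The series representation exploited on $\left[1,\infty\right[$ converges far too slowly when $y$ is small (for $n\lesssim 1/y$ the summands are of size $\sim 1/y$, and the positivity thresholds such as $y\geqslant 2/\pi$ all fail), so the plan is instead to pass to the reciprocal variable by the modular relation for $\vartheta_4$. The Jacobi transformation gives
\[
\vartheta_4(y)=\frac{1}{\sqrt y}\,\vartheta_2\!\left(\frac1y\right),
\]
and the substitution $u=1/y$ sends $y\in\left]0,1\right]$ exactly onto $u\in\left[1,\infty\right[$, which is the range where Lemma~\ref{sharper-theta2-approximations} controls $\vartheta_2$ and its derivatives. Writing $\log\vartheta_4(y)=-\tfrac12\log y+P(1/y)$ with $P=\log\vartheta_2$, one finds $f(y)=-\tfrac y2-P'(u)$, and differentiating twice in $y$ (using $\mathrm du/\mathrm dy=-u^2$) the linear term disappears, leaving the clean identity
\[
f''(y)=-u^3\bigl(2\,P''(u)+u\,P'''(u)\bigr),\qquad u=\frac1y .
\]
Since $u^3>0$, convexity on $\left]0,1\right]$ is equivalent to $2\,P''(u)+u\,P'''(u)<0$ for $u\geqslant 1$, which I would clear of denominators by multiplying with $\vartheta_2(u)^3>0$ to reach the polynomial inequality
\[
N(u):=2\vartheta_2^2\vartheta_2''-2\vartheta_2(\vartheta_2')^2+u\bigl(\vartheta_2^2\vartheta_2'''-3\vartheta_2\vartheta_2'\vartheta_2''+2(\vartheta_2')^3\bigr)<0 .
\]

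The decisive structural feature is that $N$ is a cubic expression in $\vartheta_2,\vartheta_2',\vartheta_2'',\vartheta_2'''$ which vanishes identically when $\vartheta_2$ is replaced by a single exponential $c\,e^{\lambda u}$. Hence, expanding $\vartheta_2(u)=2e^{-\pi u/4}+2e^{-9\pi u/4}+\dotsb$, every contribution of pure order $e^{-3\pi u/4}$ cancels, and the first surviving contribution comes from the cross terms between $e^{-\pi u/4}$ and $e^{-9\pi u/4}$. A direct computation of these cross terms yields a dominant term
\[
64\,\pi^2\,e^{-11\pi u/4}\,(1-\pi u),
\]
which is strictly negative for $u\geqslant 1$ because then $\pi u>1$. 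This is exactly why the exponent $2$ in $f$ is critical: it is the value that makes the $P''$ and $P'''$ contributions balance so that the leading nonvanishing term carries the favorable sign.

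To make this rigorous I would insert the two-sided bounds of Lemma~\ref{sharper-theta2-approximations}. Writing $(-1)^\nu\vartheta_2^{(\nu)}(u)=\vartheta_{2,\nu}(u)+\eta_\nu(u)$ with $0<\eta_\nu(u)<\bigl(\Theta_{2,\nu}-\vartheta_{2,\nu}\bigr)(u)$, the functions $\vartheta_{2,\nu}$ are precisely the $\nu$-th signed derivatives of the two-term model $2e^{-\pi u/4}+2e^{-9\pi u/4}$, so substituting them into $N$ produces exactly
\[
64\,\pi^2\,e^{-11\pi u/4}\,(1-\pi u)+64\,\pi^2\,e^{-19\pi u/4}\,(1+\pi u),
\]
whose first summand is negative for $u\geqslant 1$ and dominates the much smaller positive second summand, while each remainder $\eta_\nu$, being bounded by the tiny constant $c_\nu$ times a term of order $e^{-9\pi u/4}$, contributes to $N$ only quantities bounded by $c_\nu$ times terms of order $e^{-11\pi u/4}$. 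The main obstacle is precisely this bookkeeping: because the leading order cancels one cannot bound the factors crudely, but must track the signs in the cubic form $N$ and verify that the $c_\nu$-controlled error, whose coefficient is some four orders of magnitude smaller than the $64\pi^2(\pi u-1)$ of the main term, can never overturn the strict negativity on $u\geqslant 1$. Once this explicit exponential–polynomial estimate is checked, $N(u)<0$ follows, and hence $f$ is strictly convex on $\left]0,1\right]$.
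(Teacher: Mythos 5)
Your proposal is correct, and at its core it is the same proof as the paper's: you use the same modular relation to carry $\left]0,1\right]$ over to $\left[1,\infty\right[$, reduce convexity to a sign condition on a cubic form in $\vartheta_2,\vartheta_2',\vartheta_2'',\vartheta_2'''$, and control that form by Lemma~\ref{sharper-theta2-approximations}; indeed your $N(u)$ is exactly $-u^{-9/2}\,h(1/u)$ in the paper's notation, so the two reductions coincide (the paper reaches it by differentiating the modular relation three times rather than through $\log\vartheta_2$, but that is cosmetic). Where you genuinely depart is the endgame. The paper substitutes the lemma's bounds monomial by monomial (lower bounds $\vartheta_{2,\nu}$ into positively signed monomials, upper bounds $\Theta_{2,\nu}$ into negatively signed ones) and then verifies positivity of the resulting explicit expression $e^{4\pi y}\left(\alpha y-\beta\right)+e^{2\pi y}\left(-\gamma y-\delta\right)-\varepsilon y-\zeta$; note that $\alpha\approx64\pi^3$ and $\beta\approx64\pi^2$, so its leading block is precisely your main term $64\pi^2\left(\pi u-1\right)$. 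You instead split $(-1)^\nu\vartheta_2^{(\nu)}$ into the two-term model plus an error $\eta_\nu$ and evaluate the model contribution in closed form, using the (correct, and genuinely clarifying) observation that the cubic form annihilates single exponentials; your formula $64\pi^2\left(e^{-11\pi u/4}\left(1-\pi u\right)+e^{-19\pi u/4}\left(1+\pi u\right)\right)$ checks out. Your organization explains structurally why the leading $e^{-3\pi u/4}$-level terms cancel (in the paper this cancellation just happens in the arithmetic) and it preserves the true positive sign of the $e^{-19\pi u/4}$-level term, where the paper's term-by-term bounding is lossy and produces $-\gamma y-\delta$; in exchange, the paper's route needs no multilinear error analysis at all. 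The one piece you did not carry out is the error bookkeeping you rightly flag as the remaining obstacle; it does go through: every term of $N$ containing at least one $\eta_\nu$ is bounded by its coefficient times two factors of size at most $\Theta_{2,\mu}(u)\approx2\left(\pi/4\right)^\mu e^{-\pi u/4}$ times $2c_\nu\left(9\pi/4\right)^\nu e^{-9\pi u/4}$, the worst case $u\,\Theta_{2,0}^2\,\eta_3$ being about $0.9\,u\,e^{-11\pi u/4}$, and the sum of all such terms together with the positive $64\pi^2\left(1+\pi u\right)e^{-19\pi u/4}$ term (at most about $5\,e^{-11\pi u/4}$ for $u\geqslant1$) cannot overturn a main term of size $\left(64\pi^3u-64\pi^2\right)e^{-11\pi u/4}\geqslant1350\,e^{-11\pi u/4}$. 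So your proof is complete modulo that routine, fully explicit verification.
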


\begin{proof}
The second derivative of $f(y)$ is $h(y)/\vartheta_4^3(y)$, where
\begin{multline*}
h(y)=2\,\vartheta_4'(y)\,\vartheta_4^2(y)
+4\,y\,\vartheta_4''(y)\,\vartheta_4^2(y)
+y^2\,\vartheta_4'''(y)\,\vartheta_4^2(y)\\
-4\,y\left(\vartheta_4'(y)\right)^2\vartheta_4(y)
-3\,y^2\,\vartheta_4''(y)\,\vartheta_4'(y)\,\vartheta_4(y)
+2\,y^2\left(\vartheta_4'(y)\right)^3.
\end{multline*}
Since $\vartheta_4(y)$ is strictly positive, it is enough to prove that $h(y)>0$ for $y\in\left]0,1\right]$.

Differentiating three times the modularity relation
\[\vartheta_4(y)=y^{-1/2}\,\vartheta_2\!\left(\frac1y\right),\]
we get first
\[\vartheta_4'(y)=-\frac12\,y^{-3/2}\,\vartheta_2\!\left(\frac1y\right)
-y^{-5/2}\,\vartheta_2'\!\left(\frac1y\right),\]
then
\[\vartheta_4''(y)=\frac34\,y^{-5/2}\,\vartheta_2\!\left(\frac1y\right)
+3\,y^{-7/2}\,\vartheta_2'\!\left(\frac1y\right)
+y^{-9/2}\,\vartheta_2''\!\left(\frac1y\right),\]
and finally
\[\vartheta_4'''(y)=-\frac{15}8\,y^{-7/2}\,\vartheta_2\!\left(\frac1y\right)
-\frac{45}4\,y^{-9/2}\,\vartheta_2'\!\left(\frac1y\right)
-\frac{15}2\,y^{-11/2}\,\vartheta_2''\!\left(\frac1y\right)
-y^{-13/2}\,\vartheta_2'''\!\left(\frac1y\right).\]

Substituting these back to $h(y)$ we are left to prove that the expression
\begin{multline*}
h\!\left(\frac1y\right)
=2\,y^{9/2}\left(\vartheta_2'(y)\right)^2\vartheta_2(y)
-2\,y^{9/2}\,\vartheta_2''(y)\,\vartheta_2^2(y)
-2\,y^{11/2}\left(\vartheta_2'(y)\right)^3\\
+3\,y^{11/2}\,\vartheta_2''(y)\,\vartheta_2'(y)\,\vartheta_2(y)
-y^{11/2}\,\vartheta_2'''(y)\,\vartheta_2^2(y)
\end{multline*}
is strictly positive for $y\in\left[1,\infty\right[$.

Using Lemma \ref{sharper-theta2-approximations}, we may estimate, for $y\in\left[1,\infty\right[$,
\begin{align*}
h\!\left(\frac1y\right)
&>2\,y^{9/2}\,\vartheta_{2,1}^2(y)\,\vartheta_{2,0}(y)
-2\,y^{9/2}\,\Theta_{2,2}(y)\,\Theta_{2,0}^2(y)
+2\,y^{11/2}\,\vartheta_{2,1}^3(y)\\
&\qquad-3\,y^{11/2}\,\Theta_{2,2}(y)\,\Theta_{2,1}(y)\,\Theta_{2,0}(y)
+y^{11/2}\,\vartheta_{2,3}(y)\,\vartheta_{2,0}^2(y)\\
&=y^{9/2}\,e^{-27\pi y/4}
\left(e^{4\pi y}\left(\alpha y-\beta\right)
+e^{2\pi y}\left(-\gamma y-\delta\right)
-\varepsilon y-\zeta\right),
\end{align*}
with constants $\alpha\approx1984.32$, $\beta\approx631.718$, $\gamma\approx1985.41$, $\delta\approx631.798$, $\varepsilon\approx1.01719$ and $\zeta\approx0.0799451$. Thus, we may continue the estimations by
\begin{align*}
h\!\left(\frac1y\right)&>y^{9/2}\,e^{-27\pi y/4}
\left(e^{4\pi y}\left(1984\,y-632\right)
+e^{2\pi y}\left(-1986\,y-632\right)
-2\,y-0.08\right)\\
&\geqslant y^{9/2}\,e^{-27\pi y/4}
\left(e^{2\pi y}\left(535\cdot1984\,y-535\cdot632-1986\,y-632\right)
-2\,y-0.08\right)\\
&>y^{9/2}\,e^{-27\pi y/4}
\left(e^{2\pi y}\left(533\cdot1984\,y-534\cdot632\right)
-2\,y-0.08\right)>0.
\end{align*}
\end{proof}

\begin{theorem}
The first derivative of the function $f(y)=y^2\,\vartheta_4'(y)/\vartheta_4(y)$ is strictly decreasing for $y\in\mathbb R_+$.
\end{theorem}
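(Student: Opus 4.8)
The plan is to deduce the desired monotonicity from the convexity already established. The two preceding theorems together give $f''(y)>0$ for every $y\in\left]0,\infty\right[$, so that $f'$ is strictly \emph{increasing} on $\mathbb{R}_+$. In particular $f'$ cannot be strictly decreasing, and the assertion to be proved — in accordance with the abstract and with the outline in the introduction — is that $f$ itself is strictly decreasing, i.e.\ that $f'(y)<0$ for every $y\in\mathbb{R}_+$. It is this inequality for the first derivative that I would establish.

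\textbf{The analytic input.} The one estimate needed is the decay of $f$ at infinity. Starting from the series
\[
f(y)=2\,y^2\sum_{n=1}^{\infty}\left(\frac{n\pi}{e^{2n\pi y}-1}+\frac{(2n-1)\,\pi}{e^{(2n-1)\pi y}-1}\right)
\]
derived in the first convexity proof, every summand is positive, so $f(y)>0$ for all $y\in\mathbb{R}_+$. For $y\geqslant1$ one has $e^{2n\pi y}-1\geqslant\tfrac12\,e^{2n\pi y}$, and similarly for the odd exponents, so the bracketed sum is dominated by a constant multiple of its slowest term $e^{-\pi y}$; hence $f(y)\leqslant C\,y^2\,e^{-\pi y}\to0$ as $y\to\infty$. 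Thus $f$ is strictly positive and tends to $0$ at infinity.

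\textbf{The conclusion.} With these two facts the argument I would give is short. Suppose, for a contradiction, that $f'(y_0)\geqslant0$ for some $y_0\in\mathbb{R}_+$. Since $f'$ is strictly increasing, $f'(y)>f'(y_0)\geqslant0$ for all $y>y_0$, so $f$ is strictly increasing on $\left[y_0,\infty\right[$ and therefore $f(y)>f(y_0)>0$ there, contradicting $f(y)\to0$. Hence $f'(y)<0$ for every $y\in\mathbb{R}_+$, which is precisely the claimed strict decrease of $f$.

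\textbf{Main obstacle.} I expect the principal difficulty here to be interpretive rather than computational: one must first notice that the convexity proved above forces $f'$ to be increasing, so that the statement has to be read as $f'(y)<0$ throughout $\mathbb{R}_+$ (equivalently, $f$ strictly decreasing) rather than literally as a decrease of $f'$. Once this is settled, the only genuine technical point is the termwise bound yielding $f(y)\to0$, which is the routine estimate indicated above; the remainder is the short monotonicity-and-limit argument.
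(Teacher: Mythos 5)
Your proof is correct, and your interpretive point is exactly right: read literally, the theorem contradicts the convexity just established, and the paper's own proof (like yours) in fact establishes $f'(y)<0$ throughout $\mathbb{R}_+$, i.e.\ that $f$ itself is strictly decreasing, concluding with ``the function is thus decreasing.'' Where you differ from the paper is in the analytic input used to anchor the convexity argument. The paper works directly with the explicit series for $f'(y)$, pairs each term $4n\pi y/(e^{2n\pi y}-1)$ with the corresponding squared-denominator term, and observes that each paired difference is negative once $y$ is large (the even-index factors $e^{2n\pi y}-1-n\pi y\,e^{2n\pi y}$ are negative already for $y>1/\pi$, and the odd-index ones for $y>2/\pi$); so $f'(y)<0$ for all large $y$, and since convexity makes $f'$ increasing, $f'<0$ everywhere. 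You instead avoid any sign analysis of $f'$ and argue softly: $f>0$ termwise, $f(y)\le C\,y^2 e^{-\pi y}\to0$, and an increasing $f'$ with $f'(y_0)\ge0$ anywhere would force $f$ to increase away from a positive value, contradicting the decay. Both routes lean on the two convexity theorems in the same way; the paper's buys an explicit unconditional threshold ($f'(y)<0$ for all $y>2/\pi$, independently of convexity), while yours trades the termwise sign check for a routine decay estimate and is, if anything, more complete than the paper's at the one point where it says only ``negative when $y$ is sufficiently large'' without quantifying.
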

\begin{proof} We have proved that the function is strictly convex, namely, the second derivative is positive. Hence, it suffices to prove that the first derivative is negative for large values of $y$. The first derivative is
\begin{multline*}
f'(y)=4y\sum_{n=1}^{\infty}\left(\frac{n\pi}{e^{2n\pi y}-1}+\frac{(2n-1)\,\pi}{e^{(2n-1)\pi y}-1}\right)\\-2\,y^2\sum_{n=1}^{\infty}\left(\frac{2\,n^2\,\pi^2\,e^{2n\pi y}}{(e^{2n\pi y}-1)^2}+\frac{(2n-1)^2\,\pi^2\,e^{(2n-1)\pi y}}{(e^{(2n-1)\pi y}-1)^2}\right).
\end{multline*}
Let us first look at the terms
\[
\frac{4n\pi y }{e^{2n\pi y}-1}-\frac{4\,n^2\,\pi^2\,y^2\,e^{2n\pi y}}{(e^{2n\pi y}-1)^2}=\frac{4\pi yn}{(e^{2n\pi y}-1)^2}\left(e^{2n\pi y}-1-yn\pi\,e^{2n\pi y}\right).
\]
The first factor is clearly positive, while the second factor is clearly negative when $y$ is sufficiently large.

Let us now move to the other terms:
\begin{multline*}
\frac{4y\pi\,(2n-1)}{e^{(2n-1)\pi y}-1}-\frac{2\,(2n-1)^2\,\pi^2\,y^2\,e^{(2n-1)\pi y}}{(e^{(2n-1)\pi y}-1)^2}\\
=\frac{2\,(2n-1)\,\pi y}{(e^{(2n-1)\pi y}-1)^2}\left(2\,(e^{(2n-1)\pi y}-1)-(2n-1)\,\pi y \,e^{(2n-1)\pi y}\right).
\end{multline*}

The first factor is clearly positive while the second one is negative for large $y$, so the product is negative. The function is thus decreasing.
\end{proof}

\end{document}